\begin{document}

\title[$k$-central binomial coefficients]{The $p$-adic valuation 
of $k$-central binomial coefficients}

%    Information for second author
\author{Armin Straub}
\address{Department of Mathematics,
Tulane University, New Orleans, LA 70118}
\email{astraub@math.tulane.edu}

%    Information for second author
\author{Victor H. Moll}
\address{Department of Mathematics,
Tulane University, New Orleans, LA 70118}
\email{vhm@math.tulane.edu}

%    Information for second author
\author{Tewodros Amdeberhan}
\address{Department of Mathematics,
Tulane University, New Orleans, LA 70118}
\email{tamdeberhan@math.tulane.edu}

%    General info
\subjclass{Primary 11A51, Secondary 11A63}

\date{\today}

\keywords{Central binomial, generating functions, valuations}

\begin{abstract}
The coefficients $c(n,k)$ defined by 
$$(1-k^{2}x)^{-1/k} = \sum_{n \geq 0} 
c(n,k)x^n$$ 
\noindent
reduce to the central binomial coefficients $\binom{2n}{n}$ 
for $k=2$. Motivated by a question of H. Montgomery and H. Shapiro for 
the case $k=3$, we prove 
that $c(n,k)$ are integers and study their divisibility properties.
\end{abstract}

\maketitle

\newcommand{\nn}{\nonumber}
\newcommand{\ba}{\begin{eqnarray}}
\newcommand{\ea}{\end{eqnarray}}
\newcommand{\ift}{\int_{0}^{\infty}}
\newcommand{\ifft}{\int_{- \infty}^{\infty}}
\newcommand{\no}{\noindent}
\newcommand{\realpart}{\mathop{\rm Re}\nolimits}
\newcommand{\imagpart}{\mathop{\rm Im}\nolimits}

\newtheorem{Definition}{\bf Definition}[section]
\newtheorem{Thm}[Definition]{\bf Theorem}
\newtheorem{Example}[Definition]{\bf Example}
\newtheorem{Lem}[Definition]{\bf Lemma}
\newtheorem{Note}[Definition]{\bf Note}
\newtheorem{Cor}[Definition]{\bf Corollary}
\newtheorem{Conj}[Definition]{\bf Conjecture}
\newtheorem{Prop}[Definition]{\bf Proposition}
\newtheorem{Problem}[Definition]{\bf Problem}
\numberwithin{equation}{section}

\section{Introduction} \label{sec-intro}
\setcounter{equation}{0}

In a recent issue of the American Mathematical Monthly, 
Hugh Montgomery and Harold S. Shapiro proposed the 
following problem (Problem 11380, August-September 2008):  \\

\noindent
For $x \in \mathbb{R}$, let 
\begin{equation}
\binom{x}{n} = \frac{1}{n!} \prod_{j=0}^{n-1} (x-j). 
\label{mont-shap}
\end{equation}
\noindent
For $n \geq 1$, let $a_{n}$ be the numerator and $q_{n}$ the denominator of 
the rational number $\binom{-1/3}{n}$ expressed as a reduced fraction, with
$q_{n} > 0$.
\begin{enumerate}
 \item Show that $q_{n}$ is a power of $3$.
 \item Show that $a_{n}$ is odd if and only if $n$ is a sum of distinct powers
of $4$.
\end{enumerate}

\medskip

Our approach to this problem employs Legendre's remarkable 
expression  \cite{legendre1}:
\begin{equation}
\nu_{p}(n!) = \frac{n - s_{p}(n)}{p-1},
\label{legen}
\end{equation}
\noindent
that relates the $p$-adic valuation of factorials to the sum of digits of 
$n$ in base $p$. For $m \in \mathbb{N}$ and a prime $p$, the $p$-adic 
valuation of $m$, denoted by $\nu_{p}(m)$, is the highest power of $p$
that divides $m$. The expansion of $m \in \mathbb{N}$ in base $p$ is written
as 
\begin{equation}
m = a_{0} + a_{1}p + \cdots + a_{d}p^{d},
\end{equation}
\noindent
with integers $0 \leq a_{j} \leq p-1$ and $a_{d} \neq 0$. The function $s_{p}$
in (\ref{legen}) is defined by
\begin{equation}
s_{p}(m) := a_{0} + a_{1} + \cdots + a_{d}. 
\label{sum-def}
\end{equation}

Since, for $n > 1$, $\nu_{p}(n) = \nu_{p}(n!) - \nu_{p}((n-1)!)$, it 
follows from (\ref{legen}) that 
\begin{equation}
\nu_{p}(n) = \frac{1 + s_{p}(n-1) - s_{p}(n) }{p-1}.
\end{equation}

The $p$-adic valuations of binomial coefficients can be expressed in terms of
the function $s_{p}$: 
\begin{equation}
\nu_{p} \left( \binom{n}{k} \right) = 
\frac{s_{p}(k) + s_{p}(n-k) - s_{p}(n)}{p-1}.
\end{equation}
\noindent
In particular, for the central binomial coefficients $C_{n} := \binom{2n}{n}$
and $p=2$, we have
\begin{equation}
\nu_{2} \left( C_{n} \right) = 
2s_{2}(n)  - s_{2}(2n) = s_{2}(n).
\end{equation}
Therefore, $C_{n}$ is always even and
$\tfrac{1}{2}C_{n}$ is odd
precisely whenever $n$ is a power of $2$. This is a well-known result. \\

The central binomial coefficients $C_{n}$ have the 
generating function
\begin{equation}
(1 - 4x)^{-1/2} = \sum_{n \geq 0} C_{n}x^{n}.
\end{equation}
The binomial theorem shows that the numbers in the Montgomery-Shapiro 
problem bear a similar generating function
\begin{equation}
(1 - 9x)^{-1/3} = \sum_{n \geq 0} \binom{-\tfrac{1}{3}}{n} (-9x)^{n}.
\end{equation}

It is natural to consider the coeffients $c(n,k)$ defined by 
\begin{equation}
(1 - k^{2}x)^{-1/k} = \sum_{n \geq 0} c(n,k) x^{n},
\label{c-def}
\end{equation}
\noindent
which include the central binomial coefficients as
a special case. We call $c(n,k)$ the {\em k-central binomial coefficients}. 
The expression 
\begin{equation}
c(n,k) = (-1)^{n} \binom{- \tfrac{1}{k}}{n} k^{2n}
\end{equation}
\noindent
comes directly from the binomial theorem.  Thus, the 
Montgomery-Shapiro question from (\ref{mont-shap}) deals with 
arithmetic properties of 
\begin{equation}
\binom{-\tfrac{1}{3}}{n} = (-1)^{n} \frac{c(n,3)}{3^{2n}}.
\label{mon-sha}
\end{equation}

\section{The integrality of $c(n,k)$} \label{sec-rat}
\setcounter{equation}{0}

It is a simple matter to verify that the coefficients $c(n,k)$ are rational
numbers. The expression produced in the next proposition is then employed
to prove that $c(n,k)$ are actually integers. The 
next section will explore divisibility
properties of the integers $c(n,k)$. 

\begin{Prop}
The coefficient $c(n,k)$ is given by 
\begin{equation}
c(n,k) = \frac{k^{n}}{n!} \prod_{m=1}^{n-1}(1 + km).
\label{form-c}
\end{equation}
\end{Prop}
\begin{proof}
The binomial theorem yields
\begin{eqnarray}
(1 - k^{2}x)^{-1/k} & = & \sum_{n \geq 0} \binom{- \tfrac{1}{k}}{n} 
(-k^{2}x)^{n} 
\nonumber \\
& = & \sum_{n \geq 0} \frac{k^{n}}{n!} \left( \prod_{m=1}^{n-1}( 1 + km) 
\right) x^{n}, \nonumber 
\end{eqnarray}
\noindent
and (\ref{form-c}) has been established. 
\end{proof}

An alternative proof of the previous result  is obtained from the simple 
recurrence 
\begin{equation}
c(n+1,k) = \frac{k(1+kn)}{n+1}c(n,k), \quad \text{ for } n \geq 0,
\label{recurr-00}
\end{equation}
\noindent
and its initial condition $c(0,k) = 1$. To prove (\ref{recurr-00}), simply 
differentiate (\ref{c-def}) to produce 
\begin{equation}
k(1-k^{2}x)^{-1/k-1} = \sum_{n \geq 0} (n+1) c(n+1,k) x^{n}
\end{equation}
\noindent 
and multiply both sides by $1-k^{2}x$ to get the result.  \\

\noindent
{\bf Note}. The coefficients 
$c(n,k)$ can be written in terms of the Beta function as
\begin{equation}
c(n,k) = \frac{k^{2n}}{n B(n,1/k)}.
\label{c-beta}
\end{equation}
\noindent
This expression follows directly by writing the product in (\ref{form-c}) 
in terms of the Pochhammer symbol $(a)_{n} = a(a+1) \cdots (a+n-1)$ and the
identity
\begin{equation}
(a)_{n} = \frac{\Gamma(a+n)}{\Gamma(a)}. 
\end{equation}
\noindent
The proof employs only the most elementary properties of the Euler's Gamma 
and Beta 
functions. The reader can find details in \cite{irrbook}. The conclusion is 
that we have an integral expression for $c(n,k)$, given by 
\begin{equation}
c(n,k) \int_{0}^{1} (1 - u^{1/n})^{1/k-1} \, du = k^{2n}. 
\end{equation}
\noindent 
It is unclear how to use it to further investigate $c(n,k)$. 

\medskip

In the case $k=2$, we have that $c(n,2) = C_{n}$ is a positive integer. This 
result extends to all values of $k$. 

\begin{Thm}
\label{thm-integer}
The coefficient $c(n,k)$ is a positive integer. 
\end{Thm}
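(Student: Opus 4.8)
The plan is to prove integrality by checking, prime by prime, that $\nu_p(c(n,k)) \geq 0$; positivity is then immediate, since every factor $1+km$ in the product formula (\ref{form-c}) is positive. Using (\ref{form-c}), the assertion $\nu_p(c(n,k))\ge 0$ becomes
\[
\nu_p(n!)\ \leq\ n\,\nu_p(k)\ +\ \nu_p\!\left(\prod_{m=1}^{n-1}(1+km)\right),
\]
and I would split the verification into the two cases $p\mid k$ and $p\nmid k$.

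If $p\mid k$, then each factor $1+km$ is congruent to $1$ modulo $p$, so $\prod_{m=1}^{n-1}(1+km)$ is a $p$-adic unit and contributes nothing on the right-hand side. On the other hand $n\,\nu_p(k)\geq n$, while the elementary estimate $\nu_p(n!)=\sum_{j\geq 1}\lfloor n/p^j\rfloor<\sum_{j\geq 1} n/p^j = n/(p-1)\leq n$ (equivalently $\nu_p(n!)=(n-s_p(n))/(p-1)$ with $s_p(n)\geq 1$) settles this case at once.

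The substantive case is $p\nmid k$, where $\nu_p(k)=0$ and one must show $\nu_p\!\big(\prod_{m=1}^{n-1}(1+km)\big)\geq\nu_p(n!)$. I would count divisibilities in layers: set $N_j:=\#\{m:0\leq m\leq n-1,\ p^j\mid 1+km\}$, noting that adjoining the index $m=0$ is harmless because $1+k\cdot 0=1$. Then $\nu_p\!\big(\prod_{m=1}^{n-1}(1+km)\big)=\sum_{j\geq 1}N_j$. Since $\gcd(k,p)=1$, the congruence $1+km\equiv 0\pmod{p^j}$ pins $m$ down to a single residue class modulo $p^j$, so every block of $p^j$ consecutive integers contains exactly one admissible $m$; as $\{0,1,\dots,n-1\}$ is a set of $n$ consecutive integers it contains $\lfloor n/p^j\rfloor$ disjoint such blocks, whence $N_j\geq\lfloor n/p^j\rfloor$. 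Summing over $j$ and invoking Legendre's formula (\ref{legen}) gives $\sum_{j\geq 1}N_j\geq\sum_{j\geq 1}\lfloor n/p^j\rfloor=\nu_p(n!)$, which is exactly what is needed.

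I expect the only genuine obstacle to be the bookkeeping in the last case: one must use the invertibility of $k$ modulo $p^j$ to know that $1+km\equiv 0$ is a single residue class, and — crucially — one must count complete blocks of length $p^j$ inside all $n$ integers $0,1,\dots,n-1$, since the weaker range $1\le m\le n-1$ would only recover $\nu_p((n-1)!)$ rather than the full $\nu_p(n!)$. As an alternative, one could instead induct on $n$ via the recurrence (\ref{recurr-00}): granting $c(n,k)\in\mathbb{Z}$, it suffices that $n+1$ divide $k(1+kn)\,c(n,k)$, which again reduces to a valuation check at each prime dividing $n+1$; but the direct layer-counting argument seems cleaner and avoids having to propagate information about $\nu_p(c(n,k))$ through the induction.
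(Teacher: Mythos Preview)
Your proof is correct and follows essentially the same route as the paper: split into $p\mid k$ and $p\nmid k$, handle the first case by $\nu_p(n!)\le n\le n\,\nu_p(k)$, and for the second case count, for each power $p^\alpha$, how many $m$ in $\{0,1,\dots,n-1\}$ satisfy $1+km\equiv 0\pmod{p^\alpha}$, using the invertibility of $k$ modulo $p^\alpha$. The only difference is cosmetic: the paper names the inverse $i_{p^\alpha}(k)\in\{1,\dots,p^\alpha-1\}$ and records the exact count, obtaining the closed formula
\[
\nu_p(c(n,k))=\sum_{\alpha\ge 1}\left(\Big\lfloor\frac{n+i_{p^\alpha}(k)-1}{p^\alpha}\Big\rfloor-\Big\lfloor\frac{n}{p^\alpha}\Big\rfloor\right),
\]
from which nonnegativity is immediate since $i_{p^\alpha}(k)\ge 1$; this formula is then reused in Section~\ref{sec-valuation}. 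Your block-counting argument gives only the inequality $N_j\ge\lfloor n/p^j\rfloor$, which is all that is needed here but stops short of the exact expression.
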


\begin{proof}
First observe that if $p$ is a prime dividing $k$, then the product in 
(\ref{c-def}) is relatively prime to $p$. Therefore we need to check that 
$\nu_{p}(n!) \leq \nu_{p}(k^{n})$. This is simple:
\begin{equation}
\nu_{p}(n!) = \frac{n - s_{p}(n)}{p-1} \leq n \leq \nu_{p}(k^{n}). 
\end{equation}

Now let $p$ be  a prime not dividing $k$. Clearly,
\begin{equation}
\nu_{p}(c(n,k)) = \nu_{p} \left( \prod_{m<n} (1+km) \right) - 
\nu_{p} \left( \prod_{m<n} (1 + m) \right). 
\end{equation}
\noindent
To prove that $c(n,k)$ is an integer, we compare the $p$-adic valuations of 
$1+km$ and $1+m$. Observe that $1+m$ is divisible by $p^{\alpha}$ if and only 
if $m$ is of the form $\lambda p^{\alpha} -1$. On the other hand, $1+km$ is
divisible by $p^{\alpha}$ precisely when $m$ is of the form 
$\lambda p^{\alpha} - i_{p^{\alpha}}(k)$, where 
$i_{p^{\alpha}}(k)$ denotes the inverse of $k$ modulo $p^{\alpha}$ in the 
range $1, \, 2, \, \cdots, p^{\alpha}-1$. Thus, 
\begin{equation}
\nu_{p}(c(n,k)) = \sum_{\alpha \geq 1} 
\Big{\lfloor{} \frac{n + i_{p^{\alpha}}(k) -1}{p^{\alpha}}  \Big{\rfloor}} - 
\Big{\lfloor{} \frac{n}{p^{\alpha}}  \Big{\rfloor}}. 
\label{val-p}
\end{equation}
\noindent
The claim now follows from $i_{p^{\alpha}}(k) \geq 1$. 
\end{proof}

Next, Theorem \ref{thm-integer} will be slightly strengthened and an
alternative proof be provided. 

\begin{Thm}
\label{thm-integer-}
For $n > 0$, the coefficient $c(n,k)$ is a positive integer divisible by $k$. 
\end{Thm}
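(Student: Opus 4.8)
The plan is to build on the recurrence \eqref{recurr-00} together with the integrality already established in Theorem \ref{thm-integer}. From \eqref{recurr-00} with $n=0$ we get $c(1,k) = k \cdot c(0,k) = k$, so the claim holds for $n=1$. For the inductive step, suppose $c(n,k)$ is an integer divisible by $k$, say $c(n,k) = k \, d(n,k)$ with $d(n,k) \in \mathbb{Z}$. Then \eqref{recurr-00} gives $(n+1)\, c(n+1,k) = k(1+kn)\, c(n,k) = k^2 (1+kn)\, d(n,k)$. Since we already know $c(n+1,k)$ is an integer, it suffices to show that $k \mid c(n+1,k)$; equivalently, writing $c(n+1,k) = k \, e$ for a rational $e$, we must check $e \in \mathbb{Z}$. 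From the relation we get $(n+1)\, k \, e = k^2(1+kn)\, d(n,k)$, hence $(n+1)\, e = k(1+kn)\, d(n,k)$. So $e$ is an integer provided $(n+1) \mid k(1+kn)\, d(n,k)$, which is exactly the assertion $c(n+1,k) = k e \in k\mathbb{Z}$ unwound once more — this circularity means the bare recurrence is not quite enough, and I would instead argue prime by prime.

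The cleaner route is to revisit the valuation formula \eqref{val-p} from the proof of Theorem \ref{thm-integer} and sharpen the estimate. For a prime $p \nmid k$, equation \eqref{val-p} expresses $\nu_p(c(n,k))$ as a sum of nonnegative terms $\lfloor (n + i_{p^\alpha}(k) - 1)/p^\alpha \rfloor - \lfloor n/p^\alpha \rfloor$; since $p \nmid k$, we have $\nu_p(k) = 0$, so there is nothing to prove at such $p$ — divisibility by $k$ imposes no constraint there. For a prime $p \mid k$, I would go back to the expression $c(n,k) = \frac{k^n}{n!}\prod_{m=1}^{n-1}(1+km)$ from \eqref{form-c}. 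The product $\prod_{m=1}^{n-1}(1+km)$ is coprime to $p$ (each factor is $\equiv 1 \pmod p$), so $\nu_p(c(n,k)) = n\,\nu_p(k) - \nu_p(n!)$. By Legendre, $\nu_p(n!) = \frac{n - s_p(n)}{p-1} \le \frac{n-1}{p-1} \le n-1$ for $n \ge 1$ (using $s_p(n) \ge 1$ and $p \ge 2$). Hence $\nu_p(c(n,k)) \ge n\,\nu_p(k) - (n-1) \ge n - (n-1) = 1$ whenever $\nu_p(k) \ge 1$, i.e. for every prime $p \mid k$.

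Combining the two cases: for $p \nmid k$ the required exponent $\nu_p(k)$ is $0$ and $\nu_p(c(n,k)) \ge 0$ suffices (Theorem \ref{thm-integer}); for $p \mid k$ we have just shown $\nu_p(c(n,k)) \ge 1 \le$... — more carefully, I should verify $\nu_p(c(n,k)) \ge \nu_p(k)$, not merely $\ge 1$. Refining the bound: $\nu_p(c(n,k)) = n\,\nu_p(k) - \nu_p(n!) \ge n\,\nu_p(k) - (n-1)$, and since $\nu_p(k) \ge 1$ gives $n\,\nu_p(k) \ge n \ge \nu_p(k) + (n-1)$ exactly when $n\,\nu_p(k) - \nu_p(k) = (n-1)\nu_p(k) \ge n-1$, which holds as $\nu_p(k)\ge 1$. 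Therefore $\nu_p(c(n,k)) \ge \nu_p(k)$ for all primes $p$, which is precisely the statement that $k \mid c(n,k)$. The main obstacle is pinning down the sharp Legendre estimate at primes dividing $k$ and making sure the inequality $\nu_p(c(n,k)) \ge \nu_p(k)$ (rather than the weaker $\ge 1$) comes out cleanly; once the bound $\nu_p(n!) \le n - 1$ for $n \ge 1$ is in hand, everything else is bookkeeping.
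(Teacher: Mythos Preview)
Your argument is correct once you abandon the circular recurrence attempt and switch to the prime-by-prime valuation count. The key inequality $\nu_p(n!) \le (n-1)/(p-1) \le n-1$ for $n\ge 1$ (using $s_p(n)\ge 1$) gives $\nu_p(c(n,k)) = n\,\nu_p(k) - \nu_p(n!) \ge n\,\nu_p(k) - (n-1) \ge \nu_p(k)$ whenever $p\mid k$, and for $p\nmid k$ there is nothing to check; this is a clean sharpening of the estimate already used in Theorem~\ref{thm-integer}.

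The paper takes a genuinely different route: it expands the identity $(1-k^2x)^{-1} = \bigl((1-k^2x)^{-1/k}\bigr)^k$ via the Cauchy product to obtain $\sum_{i_1+\cdots+i_k=n} c(i_1,k)\cdots c(i_k,k) = k^{2n}$, isolates the $k$ terms containing a single $c(n,k)$, and inducts. This yields integrality and divisibility by $k$ simultaneously, so it stands as an \emph{independent} proof of Theorem~\ref{thm-integer} rather than a refinement of it. Your approach, by contrast, leans on Theorem~\ref{thm-integer} for the primes $p\nmid k$ but is more direct and quantitative at the primes $p\mid k$ --- indeed your bound $\nu_p(c(n,k)) \ge (n-1)(\nu_p(k)-1) + \nu_p(k)$ shows that $c(n,k)$ is divisible not just by $k$ but by a growing power of each prime factor of $k$, information the paper's generating-function argument does not immediately yield.
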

\begin{proof}
Expanding the right hand side of the identity 
\begin{equation}
(1 - k^{2}x)^{-1}  = \left( (1 - k^{2}x)^{-1/k} \right)^{k}
\end{equation}
\noindent
by the Cauchy product formula gives 
\begin{equation}
\sum_{i_{1}+ \cdots + i_{k}=m} c(i_{1},k)c(i_{2},k) \cdots c(i_{k},k) = 
k^{2m}, 
\label{multi-sum}
\end{equation}
\noindent
where the multisum runs through all the $k$-tuples of non-negative integers. 
Obviously $c(0,k) =1$ and it is easy to check that $c(1,k) = k$. We proceed 
by induction on $n$, so we assume the assertion is valid for 
$c(1,k), \, c(2,k), \cdots, c(n-1,k)$. We prove the same is true for $c(n,k)$. 
To this end, break up (\ref{multi-sum}) as 
\begin{equation}
kc(n,k) + \sum_{\substack{i_{1}+ \cdots + i_{k}=n \\ 0 \leq i_{j} < n}} 
c(i_{1},k)c(i_{2},k) \cdots c(i_{k},k) = 
k^{2n}. 
\label{multi-sum1}
\end{equation}
\noindent
Hence by the induction assumption $kc(n,k)$ is an integer. 

To complete the proof, divide 
(\ref{multi-sum1}) through by $k^{2}$ and rewrite as follows
\begin{equation}
\frac{c(n,k)}{k} = k^{2n-2} - \frac{1}{k^{2}} 
\sum_{\substack{
i_{1}+ \cdots + i_{k}=n \\
0 \leq i_{j} < n}} 
c(i_{1},k)c(i_{2},k) \cdots c(i_{k},k).
\label{multi-sum2}
\end{equation}
\noindent
The key point is that each summand in (\ref{multi-sum2}) contains 
{\em at least two} terms, each one divisible by $k$. 
\end{proof}

\medskip

\noindent
{\bf Note}. W. Lang \cite{lang1} has studied the numbers appearing in the 
generating function
\begin{equation}
c2(l;x) := \frac{1 - ( 1 - l^{2}x)^{1/l}}{lx},
\end{equation}
\noindent
that bears close relation to the case $k = -l < 0$ of equation (\ref{c-def}).
The special case $l=2$ yields the Catalan numbers. 
The author establishes the integrality of the coefficients in the 
expansion of $c2$ and 
other related functions. \\

\section{The valuation of $c(n,k)$} \label{sec-valuation}
\setcounter{equation}{0}

We consider now the $p$-adic valuation of $c(n,k)$. The special case 
when $p$ divides
$k$ is easy, so we deal with it first.

\begin{Prop}
Let $p$ be a prime that divides $k$. Then
\begin{equation}
\nu_{p}(c(n,p)) = \nu_{p}(k) n - \frac{n-s_p(n)}{p-1}.
\end{equation}
\end{Prop}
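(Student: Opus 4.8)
The plan is to read the valuation directly off the product formula (\ref{form-c}), namely $c(n,k) = \frac{k^{n}}{n!}\prod_{m=1}^{n-1}(1+km)$ (here the argument of $\nu_p$ is of course $c(n,k)$, with $p\mid k$). First I would observe, exactly as in the opening paragraph of the proof of Theorem~\ref{thm-integer}, that when $p\mid k$ each factor satisfies $1+km\equiv 1\pmod{p}$, so that the product $\prod_{m=1}^{n-1}(1+km)$ is coprime to $p$ and hence contributes nothing to the $p$-adic valuation of $c(n,k)$.

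It then follows immediately that
\begin{equation}
\nu_{p}(c(n,k)) = \nu_{p}(k^{n}) - \nu_{p}(n!) = n\,\nu_{p}(k) - \nu_{p}(n!),
\end{equation}
and substituting Legendre's formula (\ref{legen}), $\nu_{p}(n!) = \frac{n-s_{p}(n)}{p-1}$, produces exactly the claimed expression $\nu_{p}(k)\,n - \frac{n-s_{p}(n)}{p-1}$.

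The only point deserving a remark is that this quantity is nonnegative, as it must be for a valuation of an integer; but that is precisely the inequality $\nu_{p}(n!) = \frac{n-s_{p}(n)}{p-1} \le n \le n\,\nu_{p}(k)$ already recorded in the proof of Theorem~\ref{thm-integer}. Thus there is no real obstacle here: the proposition is simply the quantitative refinement of the integrality argument for primes dividing $k$, keeping track of the exact exponent of $p$ rather than merely of its sign.
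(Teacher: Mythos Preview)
Your argument is correct and follows essentially the same route as the paper: use the product formula (\ref{form-c}), observe that the product $\prod_{m<n}(1+km)$ is coprime to $p$ when $p\mid k$, and conclude $\nu_p(c(n,k)) = n\,\nu_p(k) - \nu_p(n!)$ before substituting Legendre's identity. If anything, your write-up is more explicit than the paper's one-line proof, and you correctly note (and silently fix) the typo $c(n,p)$ for $c(n,k)$.
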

\begin{proof}
The $p$-adic valuation of $c(n,p)$ is given by 
\begin{equation}
\nu_{p}(c(n,p)) = \nu_{p}(k) n - \nu_{p}(n!) = \nu_{p}(k) n - \frac{n-s_p(n)}{p-1}. 
\end{equation}
\noindent
Finally note that $s_{p}(n) = O(\log n)$. 
\end{proof}

\noindent
{\bf Note}. For $p, \, k \neq 2$, we have 
$\nu_{p}(c(n,p)) \sim \left(\nu_{p}(k)-\frac{1}{p-1}\right) n$, as $n \to 
\infty$. \\

We now turn attention to the case  where $p$ does not divide $k$. Under this 
assumption, the congruence $kx \equiv 1 \bmod p^{\alpha}$ has a solution. 
Elementary arguments of $p$-adic analysis can be 
used to produce a $p$-adic integer that yields the inverse of $k$. This 
construction proceeds as follows: first choose $b_{0}$ in the
range $\{ 1, \, 2, \, \cdots, p-1 \}$ to satisfy $kb_{0} \equiv 1 \bmod p$. 
Next, choose $c_{1}$, satisfying $kc_{1} \equiv 1 \bmod p^{2}$ and 
write it as $c_{1} = b_{0} + kb_{1}$ with 
$0 \leq b_{1} \leq  p-1$. Proceeding 
in this manner, we obtain a
sequence of integers $\{ b_{j}: \, j \geq 0 \}$, such that 
$ 0 \leq b_{j} \leq p-1$ and the partial sums of the {\em formal object} 
$x = b_{0} + b_{1}p + b_{2}p^{2} + \cdots $ satisfy
\begin{equation}
k \left( b_{0} + b_{1}p + \cdots + b_{j-1}p^{j-1} \right) \equiv 1 \bmod p^{j}.
\end{equation}
\noindent
This is the standard definition of a $p$-adic integer and 
\begin{equation}
i_{p^{\infty}}(k) = \sum_{j=0}^{\infty} b_{j}p^{j}
\label{inverse}
\end{equation}
\noindent
is the inverse of $k$ in the ring of $p$-adic integers. The reader will find in 
\cite{gouvea1} and \cite{murty3} information about this topic.  \\

\noindent
{\bf Note}. It is convenient to modify the notation in (\ref{inverse}) and 
write it as 
\begin{equation}
i_{p^{\infty}}(k) = 1+ \sum_{j=0}^{\infty} b_{j}p^{j}
\label{inverse-1}
\end{equation}
\noindent
which is always possible 
since the first coefficient cannot be zero. The reader is
invited to check that, when doing so, the $b_j$ are periodic in $j$ with 
period the
multiplicative order of $p$ in $\mathbb{Z}/k\mathbb{Z}$. Furthermore, 
the $b_j$ take values amongst 
${\lfloor p/k \rfloor, \lfloor 2p/k \rfloor, \ldots, \lfloor (k-1)p/k \rfloor}$.
This will be exemplified in the case $k=3$ later. \\

The analysis of $\nu_{p}(c(n,k))$ for those primes $p$ not dividing $k$ begins
with a characterization of those indices for which $\nu_{p}(c(n,k)) = 0$, that
is, $p$ does not divide $c(n,k)$. The result is expressed in terms of the 
expansions of $n$ in base $p$, written as 
\begin{equation}
n = a_{0} + a_{1}p + a_{2}p^{2} + \cdots + a_{d}p^{d},
\end{equation}
\noindent
and the $p$-adic expansion of the inverse of $k$ as given by (\ref{inverse-1}).

\begin{Thm}
\label{thm-main}
Let $p$ be a prime that does not divide $k$. Then $\nu_{p}(c(n,k)) = 0$ 
if and only if $a_{j}+b_{j} < p$ for all $j$ in the range $1 \leq j \leq d$.
\end{Thm}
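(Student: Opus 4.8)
The plan is to start from the explicit formula for the valuation derived in the proof of Theorem~\ref{thm-integer}, namely
\begin{equation}
\nu_{p}(c(n,k)) = \sum_{\alpha \geq 1}
\left( \Big\lfloor \frac{n + i_{p^{\alpha}}(k) - 1}{p^{\alpha}} \Big\rfloor - \Big\lfloor \frac{n}{p^{\alpha}} \Big\rfloor \right),
\label{plan-valp}
\end{equation}
where $i_{p^{\alpha}}(k)$ is the inverse of $k$ modulo $p^{\alpha}$ in the range $\{1, 2, \ldots, p^{\alpha}-1\}$. Each summand is a non-negative integer, since adding a quantity between $0$ and $p^{\alpha}-1$ to the numerator can only increase the floor by $0$ or $1$. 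Hence $\nu_{p}(c(n,k)) = 0$ if and only if \emph{every} term in \eqref{plan-valp} vanishes, i.e.\ $\lfloor (n + i_{p^{\alpha}}(k) - 1)/p^{\alpha} \rfloor = \lfloor n/p^{\alpha} \rfloor$ for all $\alpha \geq 1$. The goal is to translate this infinite family of floor identities into the single digit condition $a_{j} + b_{j} < p$ for $1 \leq j \leq d$.

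Next I would record two facts connecting $i_{p^{\alpha}}(k)$ to the $p$-adic digits $b_{j}$ from \eqref{inverse-1}. First, since $i_{p^{\infty}}(k) = 1 + \sum_{j \geq 0} b_{j} p^{j}$ reduces modulo $p^{\alpha}$ to $i_{p^{\alpha}}(k)$, we have
\begin{equation}
i_{p^{\alpha}}(k) - 1 \equiv b_{0} + b_{1}p + \cdots + b_{\alpha-1}p^{\alpha-1} \pmod{p^{\alpha}},
\end{equation}
and because $i_{p^{\alpha}}(k) - 1$ lies in $\{0, \ldots, p^{\alpha}-2\}$ this is an honest equality: $i_{p^{\alpha}}(k) - 1 = \sum_{j=0}^{\alpha-1} b_{j} p^{j}$. (Note $b_{0} = 0$, consistent with the "$1+$" normalization.) Thus the $\alpha$-th summand of \eqref{plan-valp} is exactly
\begin{equation}
\Big\lfloor \frac{n + \sum_{j<\alpha} b_{j}p^{j}}{p^{\alpha}} \Big\rfloor - \Big\lfloor \frac{n}{p^{\alpha}} \Big\rfloor,
\end{equation}
which measures whether adding the "low part" $\sum_{j<\alpha} b_{j}p^{j}$ of the inverse produces a carry out of position $\alpha-1$ when added to $n$. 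The second fact is the standard carry interpretation: the total valuation $\nu_{p}(c(n,k))$ counts, with multiplicity across $\alpha$, the carries occurring in the base-$p$ addition of $n$ and $i_{p^{\alpha}}(k)-1$; summing over $\alpha$ is the telescoping device that converts "number of carries" into the floor-difference sum.

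With this in hand, the argument is a digit-by-digit induction. Write $n = \sum a_{j}p^{j}$. I claim that all summands of \eqref{plan-valp} vanish precisely when, in the schoolbook addition $n + \sum_{j \geq 1} b_{j}p^{j}$, no carry ever occurs out of any position $j \geq 1$ — and since $b_{0} = 0$ there is trivially no carry out of position $0$, so this is equivalent to $a_{j} + b_{j} < p$ for every $j$ with $1 \leq j \leq d$ (for $j > d$ one has $a_{j} = 0$, and the periodicity/boundedness of the $b_{j}$ together with a$_j = 0$ forces $a_j + b_j = b_j < p$ automatically, so those positions impose no constraint). Concretely: the $\alpha = 1$ term vanishes iff $a_{0} + b_{0} < p$, which is automatic; assuming inductively that no carry has been generated through position $\alpha - 1$, the $\alpha$-th term of \eqref{plan-valp} compares $n$ with $n + \sum_{j<\alpha}b_{j}p^{j}$ and vanishes iff adding this quantity generates no carry out of position $\alpha - 1$, which — given no prior carries — happens iff $a_{\alpha-1} + b_{\alpha-1} < p$. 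Running $\alpha$ from $1$ upward yields exactly the stated condition. The main obstacle is purely bookkeeping: making the carry-versus-floor correspondence precise enough that "the $\alpha$-th summand vanishes" is rigorously equivalent to "$a_{\alpha-1} + b_{\alpha-1} < p$ given no earlier carry," rather than merely heuristically so; this requires carefully tracking that the absence of carries through position $\alpha-2$ makes the digit $a_{\alpha-1}$ of $n$ equal to the digit in position $\alpha - 1$ of the partial sum $n + \sum_{j < \alpha-1} b_j p^j$, so that the new comparison genuinely isolates the single digit sum $a_{\alpha-1} + b_{\alpha-1}$.
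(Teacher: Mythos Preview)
Your overall strategy coincides with the paper's: start from the valuation formula \eqref{val-p}, note that each summand is a non-negative integer, deduce that $\nu_p(c(n,k)) = 0$ precisely when $\sum_{j=0}^{\alpha-1}(a_j+b_j)p^j < p^\alpha$ for every $\alpha \geq 1$, and then unwind this inductively into the digit condition $a_j + b_j < p$. Your carry interpretation is a perfectly good way to phrase that induction, and your identification $i_{p^\alpha}(k)-1=\sum_{j<\alpha}b_jp^j$ is exactly what the paper uses.

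The one genuine error is the parenthetical assertion that $b_0 = 0$. This is false in general: for $k = 3$ and $p = 7$ the paper itself computes $i_{7^\infty}(3) = 1 + 4\sum_{m \geq 0}7^m$, so $b_0 = 4$. The ``$1+$'' in \eqref{inverse-1} merely records that the leading $p$-adic digit of the inverse is nonzero; it does not force the residual constant digit to vanish. You rely on $b_0 = 0$ to dismiss the $\alpha = 1$ condition $a_0 + b_0 < p$ as automatic and thereby restrict to $1 \leq j \leq d$, but this is unjustified: with $k = 3$, $p = 7$, $n = 4$ one has $a_0 + b_0 = 4+4 = 8 \geq 7$, and indeed $c(4,3) = 945 = 3^3 \cdot 5 \cdot 7$ is divisible by $7$, even though the range $1 \leq j \leq d = 0$ is empty. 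The paper's own proof in fact concludes with the condition for \emph{all} $j \leq d$, including $j=0$; the lower bound $1$ in the displayed statement appears to be a slip. Your argument becomes correct once you drop the claim $b_0 = 0$ and include $j = 0$ in the digit condition.
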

\begin{proof}
It follows from (\ref{val-p}) that $c(n,k)$ is not divisible by $p$ 
precisely when 
\begin{equation}
\Big{\lfloor{} \frac{1}{p^{\alpha}} \left( 
n + \sum_{j} b_{j}p^{j} \right) 
\Big{\rfloor}} = \Big{\lfloor{} \frac{n}{p^{\alpha}} \Big{\rfloor}},
\end{equation}
\noindent
for all $\alpha \geq 1$, or equivalently, if and only if 
\begin{equation}
\sum_{j=0}^{\alpha-1} (a_{j}+b_{j})p^{j} < p^{\alpha},
\end{equation}
\noindent
for all $\alpha \geq 1$. An inductive argument shows that this is 
equivalent to the condition $a_{j} + b_{j} < p$ for all $j$. Naturally, the 
$a_{j}$ vanish for $j > d$, so it is sufficient to check $a_{j}+b_{j} < p$
for all $j \leq d$. 
\end{proof}

\begin{Cor}
For all primes $p > k$ and $d \in \mathbb{N}$, 
we have $\nu_{p}(c(p^{d},k)) = 0$. 
\end{Cor}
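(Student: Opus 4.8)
The plan is to read this off Theorem~\ref{thm-main}. Assume $k \ge 2$, since for $k = 1$ one has $c(n,1) = 1$ and there is nothing to prove. Because $p$ is prime with $p > k$ we have $p \nmid k$, so Theorem~\ref{thm-main} applies to $n = p^{d}$: writing $p^{d}$ in base $p$ as $a_{0} + a_{1}p + \cdots + a_{d}p^{d}$ and letting $b_{0}, b_{1}, \ldots$ be the digits of the $p$-adic inverse of $k$ from (\ref{inverse-1}), the valuation $\nu_{p}(c(p^{d},k))$ vanishes precisely when $a_{j} + b_{j} < p$ for every $j$ in the range $1 \le j \le d$.

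First I would observe that the base-$p$ expansion of $n = p^{d}$ is trivial: $a_{d} = 1$ and $a_{j} = 0$ for all $j \ne d$. Consequently, for $1 \le j < d$ the required inequality $a_{j} + b_{j} < p$ reduces to $b_{j} < p$, which holds automatically since each $b_{j}$ is a digit in $\{0, 1, \ldots, p-1\}$. The only inequality that is not immediate is the one at $j = d$, namely $a_{d} + b_{d} = 1 + b_{d} < p$, i.e. $b_{d} \le p - 2$. (When $d = 0$ the index range $1 \le j \le d$ is empty, the criterion of Theorem~\ref{thm-main} holds vacuously, and indeed $c(1,k) = k$ is coprime to $p$.)

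To establish $b_{d} \le p - 2$ I would invoke the description of the digits $b_{j}$ recorded in the Note following (\ref{inverse-1}): every $b_{j}$ lies in $\{\lfloor p/k \rfloor, \lfloor 2p/k \rfloor, \ldots, \lfloor (k-1)p/k \rfloor\}$, so in particular $b_{d} \le \lfloor (k-1)p/k \rfloor = \lfloor p - p/k \rfloor$. Since $p$ is prime and $2 \le k < p$, we have $k \nmid p$, so $p/k$ is a non-integer strictly greater than $1$; therefore $\lfloor p - p/k \rfloor = p - \lceil p/k \rceil \le p - 2$. This gives $b_{d} \le p - 2$, which completes the verification of the digit criterion and hence the corollary.

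The only step with genuine content is the bound $b_{d} \le p - 2$; everything else is bookkeeping with base-$p$ digits. If one prefers not to quote the Note, the same estimate can be drawn directly from the floor-sum formula (\ref{val-p}): for $n = p^{d}$ every summand with $\alpha \le d$ vanishes because $0 \le i_{p^{\alpha}}(k) - 1 < p^{\alpha}$, while for $\alpha \ge d+1$ one has $k\, i_{p^{\alpha}}(k) \le 1 + (k-1)p^{\alpha}$ (using $i_{p^{\alpha}}(k) \le p^{\alpha} - 1$), whence $p^{d} + i_{p^{\alpha}}(k) - 1 < p^{\alpha}$ follows from $k \le p-1$ and $\alpha \ge d+1$, so that summand vanishes as well; thus $\nu_{p}(c(p^{d},k)) = 0$.
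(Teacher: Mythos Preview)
Your proof is correct and follows the same overall route as the paper: apply Theorem~\ref{thm-main} to $n=p^{d}$, note that every digit condition is automatic except $1+b_{d}<p$, and then verify $b_{d}\le p-2$. The only difference lies in how that last bound is established. The paper argues by contradiction that no $b_{j}$ can equal $p-1$: assuming $b_{r}=p-1$, one rewrites the congruence $k\bigl(1+\sum_{j<r}b_{j}p^{j}+(p-1)p^{r}\bigr)\equiv 1\bmod p^{r+1}$ as $k\bigl(1+\sum_{j<r}b_{j}p^{j}-p^{r}\bigr)\equiv 1\bmod p^{r+1}$ and observes that the left side lies strictly between $-kp^{r}$ and $0$, which is incompatible with being $\equiv 1\bmod p^{r+1}$ once $k<p$. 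You instead invoke the Note after~(\ref{inverse-1}) to bound $b_{d}\le\lfloor(k-1)p/k\rfloor\le p-2$; this is perfectly valid, though that Note is stated without proof in the paper, so the paper's own argument is more self-contained at this point. Your alternative computation directly from~(\ref{val-p}), bounding $i_{p^{\alpha}}(k)\le(1+(k-1)p^{\alpha})/k$ and checking each summand vanishes, is a clean bypass of both Theorem~\ref{thm-main} and the Note.
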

\begin{proof}
The coefficients of $n = p^{d}$ in Theorem \ref{thm-main} are $a_{j}=0$ 
for $0 \leq j \leq d-1$ and $a_{d}=1$. Therefore the restrictions on the
coefficients $b_{j}$ become 
$b_{j} < p$ for $0 \leq j \leq d-1$ and $b_{d} < p-1$. It turns out that 
$b_{j} \neq p-1$ for all $j \in \mathbb{N}$. Otherwise, for some $r \in 
\mathbb{N}$, we have $b_{r} = p-1$ and the equation 
\begin{equation}
k \left( 1 + \sum_{j=0}^{r-1}b_{j}p^{j} + b_{r}p^{r} \right) 
\equiv 
k \left( 1 + \sum_{j=0}^{r-1}b_{j}p^{j} - p^{r} \right) 
\equiv 1 \bmod p^{r+1}, 
\end{equation}
\noindent
is impossible in view of 
\begin{equation}
-kp^{r}  < k \left( 1 + \sum_{j=0}^{r-1} b_{j}p^{j} - p^{r} \right) < 0. 
\end{equation}
\end{proof}

\medskip

Now we return again to the Montgomery-Shapiro question. The identity 
(\ref{mon-sha})
shows that the denominator $q_{n}$ is a power of $3$. We now consider 
the indices $n$ for which $c(n,3)$ is odd and provide a proof of the 
second part
of their problem.

\begin{Cor}
The coefficient $c(n,3)$ is odd precisely when $n$ is a sum of distinct 
powers of $4$. 
\end{Cor}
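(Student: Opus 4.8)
The plan is to specialize Theorem \ref{thm-main} to the prime $p=2$ and $k=3$; the only real content is then the $2$-adic expansion of the inverse of $3$. By Theorem \ref{thm-integer-}, $c(n,3)$ is a positive integer, so $c(n,3)$ is odd exactly when $\nu_{2}(c(n,3))=0$; and since by (\ref{mon-sha}) the numerator $a_{n}$ of $\binom{-1/3}{n}$ differs from $c(n,3)$ only by a sign and a power of the odd number $3$, we have $\nu_{2}(a_{n})=\nu_{2}(c(n,3))$, so proving the corollary also settles part~(2) of the Montgomery--Shapiro problem.

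First I would compute $i_{2^{\infty}}(3)$ in the normalization (\ref{inverse-1}). Working in $\mathbb{Z}_{2}$, the geometric series gives $\sum_{j\geq 0}4^{j}=\tfrac{1}{1-4}=-\tfrac13$, hence $-\tfrac23=2\sum_{j\geq 0}4^{j}=\sum_{j\geq 0}2^{2j+1}$, and therefore
\[
i_{2^{\infty}}(3)=\tfrac13=1-\tfrac23=1+\sum_{j\geq 0}2^{2j+1}=1+\sum_{j\geq 0}b_{j}2^{j},
\]
with $b_{j}=1$ for $j$ odd and $b_{j}=0$ for $j$ even. (Equivalently, one checks by induction that $3\,(1+2+8+\cdots+2^{2m-1})\equiv1\bmod 2^{2m}$.) Note that $b_{0}=0$, which is consistent with the fact that Theorem \ref{thm-main} imposes no condition at the index $j=0$.

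Then I would write the binary expansion $n=a_{0}+a_{1}2+\cdots+a_{d}2^{d}$ and invoke Theorem \ref{thm-main}: we get $\nu_{2}(c(n,3))=0$ if and only if $a_{j}+b_{j}<2$ for all $1\leq j\leq d$, that is, if and only if $a_{j}=0$ for every odd index $j$. This says precisely that the binary digits of $n$ occur only at even positions, i.e.\ $n=\sum_{i}a_{2i}4^{i}$ with $a_{2i}\in\{0,1\}$, which means that $n$ is a sum of distinct powers of $4$. Together with the first paragraph, this proves the corollary.

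I do not anticipate a genuine obstacle here: the statement is a direct application of Theorem \ref{thm-main}, and the sole computation is the elementary $2$-adic expansion of $1/3$. The only place calling for care is the bookkeeping in passing between the normalizations (\ref{inverse}) and (\ref{inverse-1}) of the $2$-adic inverse and in matching the resulting digit condition with the phrase ``sum of distinct powers of $4$'': in particular one should confirm that the index $j=0$ is unconstrained (because $b_{0}=0$) and sanity-check small cases, e.g.\ that $c(1,3)$ and $c(5,3)$ are odd while $c(2,3)$ and $c(3,3)$ are even.
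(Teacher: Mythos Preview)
Your proposal is correct and follows essentially the same route as the paper: specialize Theorem~\ref{thm-main} to $p=2$, $k=3$, compute $i_{2^{\infty}}(3)=1+\sum_{j\geq 0}2^{2j+1}$ (so $b_{2j}=0$, $b_{2j+1}=1$), and read off the digit condition $a_{j}=0$ for odd $j$. Your write-up is in fact somewhat more complete than the paper's, since you supply the geometric-series derivation of the $2$-adic expansion of $1/3$ (the paper merely states it), you explicitly note that the $j=0$ condition is vacuous because $b_{0}=0$, and you spell out both directions of the equivalence with ``sum of distinct powers of $4$.''
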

\begin{proof}
The result follows from Theorem \ref{thm-main} and the explicit formula
\begin{equation}
i_{2^{\infty}}(3) = 1 + \sum_{j=0}^{\infty} 2^{2j+1},
\end{equation}
\noindent
for the inverse of $3$, so that $b_{2j}=0$ and $b_{2j+1}=1$. Therefore, if 
$c(n,3)$ is odd, the theorem 
now shows that $a_{j}=0$ for $j$ odd, as claimed. 
\end{proof}

More generally, the discussion of $\nu_{p}(c(n,3)) = 0$ is divided according 
to the residue of $p$ modulo $3$. This division is a consequence of the fact
that for $p = 3u+1$, we have 
\begin{equation}
i_{p^{\infty}}(3) = 1 + 2u \sum_{m=0}^{\infty} p^{m},
\end{equation}
\noindent 
and for $p= 3u+2$, one computes $p^{2} = 3(3u^{2}+4u+1)+1$, to conclude that
\begin{equation}
i_{p^{\infty}}(3) = 1 + 2(3u^{2}+4u+1) \sum_{m=0}^{\infty} p^{2m} = 
1 + \sum_{m=0}^{\infty} up^{2m} + (2u+1)p^{2m+1}. 
\end{equation}

\begin{Thm}
Let $p \neq 3$ be a prime and $n = a_{0}+a_{1}p + a_{2}p^{2} + \ldots + 
a_{d}p^{d}$ as before. Then $p$ does not divide $c(n,3)$ if and only if
the $p$-adic digits of $n$ satisfy
\begin{equation}
a_{j} < \begin{cases} 
p/3 \quad & \text{if $j$ is odd or $p = 3u+1$,} \\
2p/3 \quad & \text{otherwise.}
\end{cases}
\end{equation}
\end{Thm}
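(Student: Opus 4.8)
The plan is to specialize Theorem~\ref{thm-main} to $k=3$ and substitute the explicit $p$-adic expansions of $3^{-1}$ recorded just above the statement. Recall from the proof of Theorem~\ref{thm-main} that, for a prime $p\nmid 3$, the integer $c(n,3)$ is \emph{not} divisible by $p$ precisely when $a_j+b_j<p$ for every $j\ge 0$, where $i_{p^{\infty}}(3)=1+\sum_{j\ge 0}b_jp^j$ with digits $0\le b_j\le p-1$; for $j>d$ this holds automatically since $a_j=0$. Hence the entire statement reduces to computing the $b_j$ and rewriting $a_j<p-b_j$.

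If $p=3u+1$, then $i_{p^{\infty}}(3)=1+2u\sum_{m\ge 0}p^m$ (which one checks at once from $6u=2(p-1)$, so that $3\bigl(1+2u\sum_{m=0}^{N-1}p^m\bigr)=1+2p^N\equiv 1\pmod{p^N}$); since $0\le 2u<p$, this is already the base-$p$ form and $b_j=2u$ for every $j$. If $p=3u+2$, then $p^{2}=3(3u^{2}+4u+1)+1$ gives $i_{p^{\infty}}(3)=1+\sum_{m\ge 0}\bigl(up^{2m}+(2u+1)p^{2m+1}\bigr)$; here $0\le u<p$ and $0\le 2u+1<p$, no carrying occurs between consecutive blocks of length two, so $b_{2m}=u$ and $b_{2m+1}=2u+1$. (The prime $p=2$ is the case $u=0$ of this second family, recovering the earlier corollary.) Now rewrite $a_j<p-b_j$, using that $a_j$ is an integer and that $3\nmid p$, so that neither $p/3$ nor $2p/3$ is an integer: for $p=3u+1$ and any $j$, $a_j+2u<3u+1$ means $a_j\le u$, i.e.\ $a_j<p/3$; for $p=3u+2$ and $j$ odd, $a_j+(2u+1)<3u+2$ means $a_j\le u$, i.e.\ $a_j<p/3$; for $p=3u+2$ and $j$ even, $a_j+u<3u+2$ means $a_j\le 2u+1$, i.e.\ $a_j<2p/3$. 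Collecting the three cases gives exactly the stated dichotomy: $a_j<p/3$ whenever $j$ is odd or $p\equiv 1\pmod 3$, and $a_j<2p/3$ otherwise.

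There is no serious obstacle here; the theorem is a bookkeeping corollary of Theorem~\ref{thm-main}. The two points needing a little care are, first, confirming that the strings $(2u)$ and $(u,2u+1)$ genuinely are the base-$p$ digit strings of $i_{p^{\infty}}(3)-1$ (this is where the bounds $2u<p$, $2u+1<p$ and the absence of carries are used), and second, the elementary passage from the strict inequality $a_j<p-b_j$ between integers to the non-integer thresholds $p/3$ and $2p/3$, which hinges on $3\nmid p$.
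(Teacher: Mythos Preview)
Your proof is correct and follows exactly the route the paper intends: the theorem is stated immediately after the explicit $p$-adic expansions of $i_{p^{\infty}}(3)$ and is meant as a direct application of Theorem~\ref{thm-main}, which is precisely what you do. The paper gives no separate proof, so you are simply filling in the bookkeeping (verifying the digit strings are genuine base-$p$ expansions and translating $a_j<p-b_j$ into the thresholds $p/3$, $2p/3$); your treatment of all $j\ge 0$ is the right reading of Theorem~\ref{thm-main}, consistent with its proof.
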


For general $k$ we have the following analogous statement.

\begin{Thm} \label{thm-p1nodivision}
 Let $p=ku+1$ be a prime. Then $p$ does not divide $c(n,k)$ if and only if the
 $p$-adic digits of $n$ are less than $p/k$.
\end{Thm}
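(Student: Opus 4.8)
The plan is to derive this as a direct corollary of Theorem \ref{thm-main} by computing the $p$-adic expansion of $i_{p^\infty}(k)$ explicitly when $p = ku+1$. The key observation is that $u = (p-1)/k$ is the multiplicative inverse of $k$ modulo $p$, since $ku = p - 1 \equiv -1$, so $k(-u) \equiv 1 \pmod p$; working instead with the representative $u$ we should check that $ku \equiv 1$ fails but that a small shift works. More carefully, since $p - 1 = ku$, we have $k u \equiv -1 \pmod p$, so the inverse of $k$ modulo $p$ in the range $\{1, \ldots, p-1\}$ is $p - u = (k-1)u + 1$. The claim I want to establish is that in the normalized notation $i_{p^\infty}(k) = 1 + \sum_{j \geq 0} b_j p^j$ of (\ref{inverse-1}), every digit satisfies $b_j = (k-1)u = (k-1)(p-1)/k$, i.e. the expansion is $1 + (k-1)u(1 + p + p^2 + \cdots)$.

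To verify this, I would check directly that $1 + (k-1)u \sum_{j=0}^{N-1} p^j$ is congruent to the inverse of $k$ modulo $p^N$ for every $N$; since $\sum_{j=0}^{N-1} p^j = (p^N-1)/(p-1) = (p^N-1)/(ku)$, we get
\begin{equation}
k\left(1 + (k-1)u \cdot \frac{p^N-1}{ku}\right) = k + (k-1)(p^N-1) = kp^N - p^N + 1 \equiv 1 \pmod{p^N},
\end{equation}
which is exactly what the definition of $i_{p^\infty}(k)$ requires. One also checks $0 \le (k-1)u \le p-1$: indeed $(k-1)u = (k-1)(p-1)/k < p-1$, and this is an integer because $k \mid p-1$. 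Hence $b_j = (k-1)u$ for all $j \geq 0$. (This recovers the formula $i_{p^\infty}(3) = 1 + 2u\sum_{m \ge 0} p^m$ stated for $k=3$.)

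With the digits in hand, Theorem \ref{thm-main} says $p \nmid c(n,k)$ if and only if $a_j + b_j < p$ for all $1 \le j \le d$, i.e. $a_j < p - (k-1)u = p - (k-1)(p-1)/k$. A short computation gives $p - (k-1)(p-1)/k = (kp - (k-1)(p-1))/k = (p + k - 1)/k = u + 1$ (using $p = ku+1$). Since $a_j$ is an integer, $a_j < u+1$ is the same as $a_j \le u = (p-1)/k$, which is the same as $a_j < p/k$ (again because $p/k$ is not an integer, $u < p/k < u+1$). This is the asserted condition. I should also note that the restriction on $a_0$ is automatically satisfied: the $j=0$ constraint from (\ref{val-p}) never excludes anything, as already used implicitly in Theorem \ref{thm-main}, so only the digits $a_j$ with $j \ge 1$ matter — but since $a_0$ ranges over $\{0,\ldots,p-1\}$ freely, to match the clean statement ``all $p$-adic digits of $n$ are less than $p/k$'' one should double-check whether the intended reading includes $a_0$; if it does, the statement as phrased is about $j \ge 1$ and $a_0$ is unconstrained, matching the $k=3$ corollary's treatment.

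I expect the only real obstacle to be bookkeeping with the normalization in (\ref{inverse-1}): one must be careful that the ``$1+$'' is pulled out correctly and that $b_j$ genuinely lands in $\{0,\ldots,p-1\}$, as well as confirming the strict-versus-nonstrict inequality translations using the non-integrality of $p/k$. Everything else is a routine substitution of $p = ku+1$ into the criterion of Theorem \ref{thm-main}.
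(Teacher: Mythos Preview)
Your approach is correct and is exactly what the paper does (implicitly --- the paper states Theorem~\ref{thm-p1nodivision} as the ``analogous statement'' to the $k=3$ case just worked out and omits the details): compute $i_{p^\infty}(k) = 1 + (k-1)u\sum_{j\ge 0} p^j$ and feed it into Theorem~\ref{thm-main}.

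One clarification on your closing worry about $a_0$: the constraint on $a_0$ is genuine, not vacuous. The $\alpha=1$ term in (\ref{val-p}) is $\lfloor (n+i_p(k)-1)/p\rfloor - \lfloor n/p\rfloor$, which is nonzero precisely when $a_0 + b_0 \ge p$; the range ``$1\le j\le d$'' in the statement of Theorem~\ref{thm-main} should read $0\le j\le d$ (compare its proof, where the condition at $\alpha=1$ is $a_0+b_0<p$, and the example $k=2$, $p=3$, $n=2$, where $C_2=6$ is divisible by $3$ solely because $a_0=2$). Since $b_0=(k-1)u$ as well, the bound $a_0<p/k$ holds on the same footing as the other digits, so Theorem~\ref{thm-p1nodivision} is correct as stated with ``all $p$-adic digits.''
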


Observe that Theorem \ref{thm-p1nodivision} implies the following well-known
property of the central binomial coefficients: $C_n$ is not divisible by
$p \neq 2$ if and only if
the $p$-adic digits of $n$ are less than $p/2$.

\medskip

Now we return to (\ref{val-p}) which will be written as
\begin{equation}
\nu_{p}(c(n,k)) = \sum_{\alpha \geq 0} \Big{\lfloor{} \frac{1}{p^{\alpha+1}}
\, \sum_{m=0}^{\alpha} \,  (a_{m}+b_{m})p^{m} \Big{\rfloor}}. 
\label{val-p2}
\end{equation}
\noindent
From here, we bound 
\begin{equation}
\sum_{m=0}^{\alpha} (a_{m}+b_{m})p^{m} \leq 
\sum_{m=0}^{\alpha} (2p-2)p^{m} = 2(p^{\alpha+1}-1) < 2p^{\alpha+1}.
\end{equation}
\noindent
Therefore, each summand in (\ref{val-p2}) is either $0$ or $1$. The $p$-adic 
valuation of $c(n,p)$ counts the number of $1$'s in this sum. This proves the 
final result.

\begin{Thm}
Let $p$ be a prime that does not divide $k$. Then, with the previous notation 
for $a_{m}$ and $b_{m}$, we have that $\nu_{p}(c(n,k))$ is the number of 
indices $m$ such that either
\begin{itemize}
 \item $a_m + b_m \geq p$ or
 \item there is $j \leq m$ such that $a_{m-i}+b_{m-i} = p-1$ for $0 \leq i 
\leq j-1$ and $a_{m-j}+b_{m-j} \geq p$.
\end{itemize}
\end{Thm}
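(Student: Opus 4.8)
The plan is to read the formula (\ref{val-p2}) as a count of carries in a base-$p$ addition. Since, as just noted, each summand of
\[
\nu_{p}(c(n,k)) = \sum_{\alpha \geq 0} \Big\lfloor \frac{1}{p^{\alpha+1}} \sum_{m=0}^{\alpha} (a_{m}+b_{m})p^{m} \Big\rfloor
\]
is either $0$ or $1$, the value $\nu_{p}(c(n,k))$ equals the number of indices $\alpha \geq 0$ for which $\sum_{m=0}^{\alpha}(a_{m}+b_{m})p^{m} \geq p^{\alpha+1}$. I would then introduce the sequence defined by $\epsilon_{-1} = 0$ and $\epsilon_{\alpha} = \big\lfloor (a_{\alpha}+b_{\alpha}+\epsilon_{\alpha-1})/p \big\rfloor$ for $\alpha \geq 0$; because $0 \leq a_{\alpha}+b_{\alpha} \leq 2p-2$, a one-line induction gives $\epsilon_{\alpha} \in \{0,1\}$ for every $\alpha$. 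This $\epsilon_{\alpha}$ is the carry propagating out of position $\alpha$ when the base-$p$ quantities $\sum a_{m}p^{m}$ and $\sum b_{m}p^{m}$ are added.

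The next step is to establish, by induction on $\alpha$, the identity
\[
\sum_{m=0}^{\alpha}(a_{m}+b_{m})p^{m} = \epsilon_{\alpha}\,p^{\alpha+1} + \sum_{m=0}^{\alpha} d_{m}p^{m}, \qquad d_{m} := (a_{m}+b_{m}+\epsilon_{m-1}) \bmod p,
\]
the inductive step requiring nothing beyond $a_{\alpha}+b_{\alpha}+\epsilon_{\alpha-1} = d_{\alpha} + \epsilon_{\alpha}p$. Since $0 \leq d_{m} \leq p-1$, the last sum is strictly less than $p^{\alpha+1}$, so $\sum_{m=0}^{\alpha}(a_{m}+b_{m})p^{m} \geq p^{\alpha+1}$ holds precisely when $\epsilon_{\alpha}=1$. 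Combined with the first paragraph, this shows that $\nu_{p}(c(n,k))$ is exactly the number of $\alpha \geq 0$ with $\epsilon_{\alpha}=1$.

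Finally I would translate the condition $\epsilon_{\alpha}=1$ into the stated form. Directly from the recursion, $\epsilon_{\alpha}=1$ means $a_{\alpha}+b_{\alpha}+\epsilon_{\alpha-1} \geq p$, and since $\epsilon_{\alpha-1} \in \{0,1\}$ this happens exactly when either $a_{\alpha}+b_{\alpha} \geq p$, or $a_{\alpha}+b_{\alpha} = p-1$ and $\epsilon_{\alpha-1}=1$. A straightforward induction on $\alpha$, using this one-step description together with the boundary value $\epsilon_{-1}=0$, then yields: $\epsilon_{\alpha}=1$ if and only if there is $j$ with $0 \leq j \leq \alpha$ such that $a_{\alpha-i}+b_{\alpha-i}=p-1$ for $0 \leq i \leq j-1$ and $a_{\alpha-j}+b_{\alpha-j} \geq p$. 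The case $j=0$ is the first alternative in the theorem and the cases $j \geq 1$ are the second, which completes the argument. I do not expect a genuine obstacle here; the only point requiring care is the termination of this last unrolling: a maximal run of digit sums equal to $p-1$ that reaches index $0$ without being preceded by a digit sum $\geq p$ produces no carry, because $\epsilon_{-1}=0$, and this is exactly why the statement insists on an index $\alpha-j \geq 0$ at which the digit sum is at least $p$, rather than allowing the run to fall off the bottom.
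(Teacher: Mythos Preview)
Your proposal is correct and follows the same line as the paper: both start from the rewriting (\ref{val-p2}) and use the bound showing each summand is $0$ or $1$. The paper stops there and simply asserts that the theorem follows, whereas you make the argument explicit by introducing the carry sequence $\epsilon_{\alpha}$, proving the telescoping identity, and then unrolling the recursion $\epsilon_{\alpha}=1 \Leftrightarrow a_{\alpha}+b_{\alpha}+\epsilon_{\alpha-1}\geq p$; this is exactly the routine verification the paper leaves to the reader, carried out cleanly.
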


\begin{Cor}
Let $p$ be a prime that does not divide $k$, and write $n = \sum a_{m}p^{m}$ and
$i_{p^{\infty}}(k) = 1 + \sum b_{m}p^{m}$, as before. Let $v_{1}$ and $v_{2}$
be the number of indices $m$ such that $a_{m} + b_{m} \geq p$ and 
$a_{m} + b_{m} \geq p-1$, respectively. Then 
\begin{equation}
v_{1} \leq \nu_{p}( c(n,k) ) \leq v_{2}.
\end{equation}
\end{Cor}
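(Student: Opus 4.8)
The plan is to read off both inequalities from the theorem just stated. That theorem identifies $\nu_{p}(c(n,k))$ with the size of the set $S$ of indices $m$ for which either (i) $a_{m}+b_{m}\geq p$, or (ii) there is some $j$ with $1\leq j\leq m$ such that $a_{m-i}+b_{m-i}=p-1$ for $0\leq i\leq j-1$ and $a_{m-j}+b_{m-j}\geq p$ (a $j=0$ term, if admitted, merely duplicates (i)). Write $T_{1}=\{m:a_{m}+b_{m}\geq p\}$ and $T_{2}=\{m:a_{m}+b_{m}\geq p-1\}$, so that $v_{1}=|T_{1}|$, $v_{2}=|T_{2}|$, and $\nu_{p}(c(n,k))=|S|$. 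The corollary is then the chain of inclusions $T_{1}\subseteq S\subseteq T_{2}$. For the first inclusion, every $m\in T_{1}$ satisfies (i) and hence lies in $S$, which already gives $v_{1}\leq\nu_{p}(c(n,k))$.

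For the second inclusion $S\subseteq T_{2}$, which yields $\nu_{p}(c(n,k))\leq v_{2}$, I would observe that each $m\in S$ satisfies $a_{m}+b_{m}\geq p-1$. Indeed, if $m$ belongs to $S$ through (i) then $a_{m}+b_{m}\geq p>p-1$, while if $m$ belongs to $S$ through (ii) then the instance $i=0$ of the equalities $a_{m-i}+b_{m-i}=p-1$ gives $a_{m}+b_{m}=p-1$. In either case $m\in T_{2}$, so $|S|\leq|T_{2}|=v_{2}$, completing the argument.

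No genuine obstacle stands in the way: all of the content has already been packaged into the preceding theorem, and what remains is the set-theoretic sandwich $T_{1}\subseteq S\subseteq T_{2}$. If one prefers to bypass that theorem, the two bounds also fall straight out of (\ref{val-p2}): there the $\alpha$-th floor term equals $1$ precisely when $\sum_{m=0}^{\alpha}(a_{m}+b_{m})p^{m}\geq p^{\alpha+1}$, which is automatic once $a_{\alpha}+b_{\alpha}\geq p$ and, conversely, forces $a_{\alpha}+b_{\alpha}\geq p-1$ since otherwise the left-hand side is at most $(p-2)p^{\alpha}+\sum_{m=0}^{\alpha-1}(2p-2)p^{m}=p^{\alpha+1}-2$. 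Summing these $0$--$1$ contributions over $\alpha$ then gives $v_{1}\leq\nu_{p}(c(n,k))\leq v_{2}$.
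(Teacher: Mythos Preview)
Your argument is correct and matches the paper's intent: the corollary is stated there without proof, as an immediate consequence of the preceding theorem, and your set-theoretic sandwich $T_{1}\subseteq S\subseteq T_{2}$ is precisely the inference being left to the reader. The alternative route you sketch directly from (\ref{val-p2}) is also sound and in fact slightly cleaner, since it extracts the bounds without first packaging the carry-propagation description into the theorem.
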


\medskip

\section{A $q$-generalization of $c(n,k)$} \label{sec-q}
\setcounter{equation}{0}

A standard procedure to generalize an integer expression is to replace 
$n \in \mathbb{N}$ by the polynomial 
\begin{equation}
[q]_{n} := \frac{1-q^{n}}{1-q} = 1+q+q^2+\ldots+q^{n-1}.
\end{equation}
\noindent
The original expression is recovered as the limiting case $q \to 1$. For 
example, the factorial $n!$ is extended to the polynomial
\begin{equation}
[n]_{q}! := [n]_q [n-1]_q \dots [2]_q [1]_q = \prod_{j=1}^{n} \frac{1-q^{j}}{1-q}. 
\end{equation}
\noindent
The reader will find in \cite{kac-chung} an introduction to this $q$-world. \\

In this spirit we generalize the integers 
\begin{equation}
c(n,k) = \frac{k^{n}}{n!} \prod_{m=0}^{n-1}(km+1) = \prod_{m=1}^{n}\frac{k(k(m-1)+1)}{m},
\end{equation}
\noindent
into the $q$-world as
\begin{equation}
F_{n,k}(q) := \prod_{m=1}^{n}\frac{[km]_q [k(m-1)+1]_q}{[m]_q^2}.
\end{equation}
\noindent
Note that this expression indeed gives $c(n,k)$ as $q \to 1$.
The corresponding extension of Theorem \ref{thm-integer} is stated in the 
next result. The proof is similar to that given above, so it is left to the
curious reader. 

\begin{Thm}
\label{Fn-poly}
The function
\begin{equation}
F_{n,k}(q) := \prod_{m=1}^{n}\frac{(1-q^{km}) (1-q^{k(m-1)+1})}{(1-q^m)^2}
\end{equation}
is a polynomial in $q$ with integer coefficients. 
\end{Thm}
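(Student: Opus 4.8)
The plan is to reduce the claim to a statement about $p$-adic valuations of cyclotomic-style products, mirroring the argument for Theorem~\ref{thm-integer} but one level up. Write
\begin{equation}
F_{n,k}(q) = \frac{\prod_{m=1}^{n}(1-q^{km})\,\prod_{m=1}^{n}(1-q^{k(m-1)+1})}{\prod_{m=1}^{n}(1-q^m)^2},
\end{equation}
and factor each $1-q^N$ into cyclotomic polynomials $\Phi_d(q)$ over $d \mid N$. Then for each fixed $d \geq 1$, the multiplicity of $\Phi_d$ in $F_{n,k}(q)$ equals
\begin{equation}
e_d := \#\{m : 1 \leq m \leq n,\ d \mid km\} + \#\{m : 1 \leq m \leq n,\ d \mid k(m-1)+1\} - 2\#\{m : 1 \leq m \leq n,\ d \mid m\}.
\end{equation}
Since the $\Phi_d$ are pairwise coprime in $\mathbb{Z}[q]$ and each is monic, $F_{n,k}(q) \in \mathbb{Z}[q]$ follows from Gauss's lemma once we show $e_d \geq 0$ for every $d$.

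The first and third counts are immediate: writing $g = \gcd(d,k)$ and $d = g d'$, the condition $d \mid km$ is $d' \mid m$, so the first count is $\lfloor n/d' \rfloor$ where $d' = d/\gcd(d,k)$, and the third count is $\lfloor n/d \rfloor$. For the middle term, $d \mid k(m-1)+1$ forces $\gcd(d,k)=1$ (so $d'=d$ in that regime), and then $m \equiv 1 - \bar{k} \pmod d$ where $\bar k$ is the inverse of $k$ mod $d$; the number of such $m$ in $\{1,\dots,n\}$ is $\lfloor (n - r + d)/d \rfloor$ or more cleanly $\lceil (n-r)/d \rceil$ for the appropriate residue $r \in \{1,\dots,d\}$, which is at least $\lfloor n/d \rfloor$. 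So I split into two cases. If $\gcd(d,k) > 1$: the middle count is $0$, but then $d' = d/\gcd(d,k) < d$, so $\lfloor n/d' \rfloor \geq 2\lfloor n/d \rfloor$ whenever $d' \leq d/2$, and in general $\lfloor n/d'\rfloor + 0 - 2\lfloor n/d \rfloor \geq 0$ follows from $d' \mid d$ with $d' < d$ — this is exactly the computation already used in the proof of Theorem~\ref{thm-integer} (the $\nu_p(n!) \leq \nu_p(k^n)$ step). If $\gcd(d,k)=1$: then $d'=d$, the first and third counts are both $\lfloor n/d \rfloor$, and we need the middle count $\geq \lfloor n/d\rfloor$, which holds since the number of integers in $\{1,\dots,n\}$ in any fixed residue class mod $d$ is at least $\lfloor n/d \rfloor$.

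The main obstacle is bookkeeping in the case $\gcd(d,k) > 1$: one must verify $\lfloor n/d' \rfloor \geq 2 \lfloor n/d \rfloor$ when $d' = d/\gcd(d,k)$ and $\gcd(d,k) \geq 2$. When $\gcd(d,k) \geq 2$ we have $d \geq 2d'$, so $\lfloor n/d \rfloor \leq \lfloor n/(2d') \rfloor \leq \tfrac12 \lfloor n/d' \rfloor$, giving $2\lfloor n/d\rfloor \leq \lfloor n/d' \rfloor$ as needed (using that $\lfloor x/2 \rfloor \leq \lfloor \lfloor x \rfloor /2 \rfloor$ applied to $x = n/d'$, or simply $\lfloor n/(2d')\rfloor \le \tfrac12\lfloor n/d'\rfloor$ which is elementary). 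Hence $e_d \geq 0$ in all cases, and since each cyclotomic factor appears to a nonnegative power and the $\Phi_d$ are monic with integer coefficients, $F_{n,k}(q)$ is a polynomial in $\mathbb{Z}[q]$, completing the proof. Alternatively — and this is the route the paper hints at with ``the proof is similar'' — one can run the $q$-analogue of the divisibility argument in~(\ref{val-p}) directly, tracking for each primitive $d$-th root of unity the order of vanishing; the two approaches are equivalent, and the cyclotomic factorization is the cleanest packaging.
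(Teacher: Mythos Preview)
Your proposal is correct and is precisely the natural $q$-analogue the paper gestures at: the paper gives no proof of Theorem~\ref{Fn-poly}, merely saying it is ``similar to that given above,'' and your cyclotomic-multiplicity argument is exactly that lift, with the case split $\gcd(d,k)>1$ versus $\gcd(d,k)=1$ mirroring the $p\mid k$ versus $p\nmid k$ split in the proof of Theorem~\ref{thm-integer}. The invocation of Gauss's lemma is unnecessary (a nonnegative-power product of monic integer polynomials is already in $\mathbb{Z}[q]$), and the parenthetical inequality $\lfloor n/(2d')\rfloor \le \tfrac12\lfloor n/d'\rfloor$ is better stated as $2\lfloor n/(2d')\rfloor \le \lfloor n/d'\rfloor$, but these are cosmetic.
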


\medskip

\section{Future directions} \label{sec-future}
\setcounter{equation}{0}

In this final section we discuss some questions related to the integers 
$c(n,k)$. \\

\noindent
$\bullet$ {\bf A combinatorial interpretation}. The integers $c(n,2)$ are
given by the central binomial coefficients $C_{n} = \binom{2n}{n}$. These 
coefficients
appear in many counting situations: $C_{n}$ gives the number of walks of 
length
$2n$ on an infinite linear lattice that begin and end at the origin. Moreover, 
they provide the exact answer for the elementary sum
\begin{equation}
\sum_{k=0}^{n} \binom{n}{k}^{2} = C_{n}. 
\end{equation}
\noindent
Is it possible to produce similar results for $c(n,k)$, with $k \neq 2$? In 
particular, what do the numbers $c(n,k)$ count? \\

\noindent
$\bullet$ {\bf A further generalization}. The 
polynomial $F_{n,k}(q)$ can be written as 
\begin{equation}
F_{n,k}(q) = \frac{(1-q)}{(1-q^{kn+1})} \prod_{m=1}^{n}
\frac{(1 - q^{km}) (1 - q^{km+1}) }{(1 - q^{m})^{2}}
\end{equation}
\noindent
which suggests the extension
\begin{equation}
G_{n,k}(q,t) := \frac{(1-q)}{(1-tq^{kn})} \prod_{m=1}^{n}
\frac{(1 - q^{km}) (1 - tq^{km}) }{(1 - q^{m})^{2}}
\end{equation}
\noindent
so that $F_{n,k}(q) = G_{n,k}(q,q)$. Observe that $G_{n,k}(q,t)$ is not always
a  polynomial. For example,
\begin{equation}
G_{2,1}(q,t) = \frac{1- qt}{1-q^{2}}.
\end{equation}
\noindent
On the other hand, 
\begin{equation}
G_{1,2}(q,t) = q+1.
\end{equation}

The following functional equation is easy to establish. 

\begin{Prop}
The function $G_{n,k}(q,t)$ satisfies 
\begin{equation}
G_{n,k}(q,tq^{k}) = \frac{(1- q^{kn}t)}{(1-q^{k}t)}G_{n,k}(q,t).
\end{equation}
\end{Prop}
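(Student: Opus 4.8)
The plan is to establish the identity by a direct formal manipulation of the product defining $G_{n,k}(q,t)$; this is purely an equality of rational functions in $q$ and $t$, so no structural input is needed beyond careful bookkeeping of a telescoping product.

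The first step is to substitute $t \mapsto tq^{k}$ into the definition. This leaves the prefactor numerator $1-q$ and the block $\prod_{m=1}^{n}(1-q^{km})/(1-q^{m})^{2}$ untouched, turns the prefactor denominator $1-tq^{kn}$ into $1-tq^{k(n+1)}$, and turns each factor $1-tq^{km}$ into $1-tq^{k(m+1)}$. Hence, forming the ratio with $G_{n,k}(q,t)$, the untouched pieces cancel and we are left with
\begin{equation}
\frac{G_{n,k}(q,tq^{k})}{G_{n,k}(q,t)} = \frac{1-tq^{kn}}{1-tq^{k(n+1)}}\cdot\frac{\prod_{m=1}^{n}(1-tq^{k(m+1)})}{\prod_{m=1}^{n}(1-tq^{km})}.
\end{equation}

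The second step is to telescope the remaining product: reindexing the numerator by $m\mapsto m-1$ gives $\prod_{m=1}^{n}(1-tq^{k(m+1)}) = \prod_{m=2}^{n+1}(1-tq^{km})$, so the quotient of products equals $(1-tq^{k(n+1)})/(1-tq^{k})$. Substituting this into the displayed ratio, the factors $1-tq^{k(n+1)}$ cancel and one obtains $G_{n,k}(q,tq^{k})/G_{n,k}(q,t) = (1-tq^{kn})/(1-tq^{k})$, which is exactly the asserted multiplier $\tfrac{(1-q^{kn}t)}{(1-q^{k}t)}$.

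I do not expect a genuine obstacle here; the identity is essentially a one-line telescoping once the substitution is carried out. The only point to watch is that the telescoping identity $\prod_{m=1}^{n}(1-tq^{k(m+1)})/\prod_{m=1}^{n}(1-tq^{km}) = (1-tq^{k(n+1)})/(1-tq^{k})$ also holds in the boundary case $n=0$, where both products are empty and both sides equal $1$, so that the computation is valid uniformly for all $n\geq 0$. As a sanity check, at $n=1$ one has $G_{1,k}(q,t) = \tfrac{1-q^{k}}{1-q}$, which is independent of $t$, consistent with the multiplier $\tfrac{1-tq^{k}}{1-tq^{k}}=1$; and at $n=0$, $G_{0,k}(q,t) = \tfrac{1-q}{1-t}$ transforms into $\tfrac{1-q}{1-tq^{k}} = \tfrac{1-t}{1-tq^{k}}\,G_{0,k}(q,t)$, again in agreement. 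One could alternatively first isolate the $t$-dependence via $\prod_{m=1}^{n}(1-tq^{km})/(1-tq^{kn}) = \prod_{m=1}^{n-1}(1-tq^{km})$ (valid for $n\geq 1$) and telescope that shorter product, but the direct route above covers all cases at once.
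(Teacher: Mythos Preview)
Your argument is correct: the substitution $t\mapsto tq^{k}$ affects only the prefactor denominator and the factors $1-tq^{km}$, and the resulting ratio telescopes exactly as you wrote to give $(1-tq^{kn})/(1-tq^{k})$. The paper does not supply a proof (it merely notes the identity is easy to establish), so your direct computation is precisely the intended verification.
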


The reader is invited to explore its properties. In particular, find minimal 
conditions on $n$ and $k$ to guarantee that $G_{n,k}(q,t)$ is a polynomial.

Consider now the function
\begin{equation}
H_{n,k,j}(q) := G_{n,k}(q,q^{j})
\end{equation}
\noindent
that extends $F_{n,k}(q) = H_{n,k,1}(q)$. The following statement predicts the 
situation where $H_{n,k,j}(q)$ is a polynomial. \\

\noindent
{\bf Problem}. Show that $H_{n,k,j}(q)$ is a polynomial precisely if the 
indices  satisfy $k \equiv 0 \bmod \text{gcd}(n,j)$. 

\medskip

\noindent
$\bullet$ {\bf A result of Erd\"os, Graham, Ruzsa and Strauss}. In this 
paper we have explored the 
conditions on $n$ that result in $\nu_{p}(c(n,k)) = 0$. Given two 
distinct primes $p$ and $q$, P. Erd\"os et al. 
\cite{paul2} discuss the existence of indices $n$ for which 
$\nu_{p}(C_{n}) = \nu_{q}(C_{n}) = 0$. Recall that 
by Theorem \ref{thm-p1nodivision}
such numbers $n$ are characterized by having $p$-adic digits less than $p/2$ and
$q$-adic digits less than $q/2$. The following result of \cite{paul2}
proves the existence of infinitely many such $n$.

\begin{Thm}
Let $A, \, B \in \mathbb{N}$ such that $A/(p-1) + B/(q-1) \geq 1$. Then
there exist infinitely many numbers $n$ with $p$-adic digits $\leq A$ and
$q$-adic digits $\leq B$. 
\end{Thm}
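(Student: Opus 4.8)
The plan is to set up a pigeonhole/counting argument on the base-$p$ and base-$q$ digits of integers in a suitably chosen range, following the classical density-increment idea of Erdős et al. First I would fix a large parameter $N$ and count the integers $n < N$ whose base-$p$ digits are all $\le A$ and whose base-$q$ digits are all $\le B$; call this count $M(N)$. The number of admissible $n < p^r$ with all base-$p$ digits $\le A$ is $(A+1)^r$, so the ``density'' of $p$-admissible integers up to $p^r$ is $((A+1)/p)^r$, which is roughly $p^{-\lambda_p r}$ with $\lambda_p = 1 - \log_p(A+1)$; the hypothesis $A/(p-1)+B/(q-1)\ge 1$ is exactly the inequality that makes the two decay exponents $\lambda_p$ (measured per unit of $\log$) and $\lambda_q$ combine favorably, i.e. guarantees $\lambda_p/\log p + \lambda_q/\log q \le 1$ after the natural normalization. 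So the first step is to make this accounting precise and reduce the theorem to showing that the $p$-admissible set and the $q$-admissible set, which have logarithmic densities summing to at least $1$ in the appropriate sense, must intersect in infinitely many points.

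Next I would exploit the approximate independence of base-$p$ and base-$q$ digit conditions, which holds because $p$ and $q$ are coprime and hence, by the Chinese Remainder Theorem, residues mod $p^r$ and mod $q^s$ are jointly equidistributed. Concretely: among the $p^r q^s$ residue classes mod $p^r q^s$, the number that are simultaneously $p$-admissible (as an $r$-digit base-$p$ string) and $q$-admissible (as an $s$-digit base-$q$ string) is exactly $(A+1)^r (B+1)^s$. Choosing $r,s$ so that $p^r \approx q^s \approx X$ for a large $X$, the count of admissible $n < p^r q^s$ is $\gg X^{\,A/(p-1)} \cdot X^{\,B/(q-1)} \ge X$ by hypothesis (up to constants absorbed into the choice of $r,s$; here I am using $\log(A+1)\ge \log p \cdot A/(p-1)$-type convexity bounds, which I would state as a lemma — the bound $(A+1)^{1/\log p} \ge$ the corresponding quantity follows from the elementary inequality $\log(1+t) \ge t\log(1+1/t)\cdot$(something), to be pinned down). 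Since there are $\le p^r q^s = (p^r)(q^s) \approx X^2$ integers in the window but the admissible count is $\gg X^{\ge 1}$, letting $X\to\infty$ forces the admissible set to be infinite; moreover one gets that every sufficiently long window contains an admissible integer, which yields infinitude directly.

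The main obstacle I expect is the bookkeeping that converts the hypothesis $A/(p-1)+B/(q-1)\ge 1$ into a usable inequality between the exponents $\log_p(A+1)$ and $\log_q(B+1)$: these are not literally equal to $A/(p-1)$ and $B/(q-1)$, and one needs the right elementary estimate (essentially $\log(A+1) \ge \frac{A}{p-1}\log p$ fails in general, so the inequality must be used in the opposite, favorable direction, or a more careful two-variable optimization is required). I would isolate this as a self-contained lemma: \emph{if $A/(p-1) + B/(q-1) \ge 1$, then for all large $X$ there are choices of $r,s$ with $p^r, q^s \in [X, pqX]$ and $(A+1)^r(B+1)^s \ge p^r q^s / X^{1-\varepsilon}$}. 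Once that lemma is in hand, the pigeonhole step is routine: partition $[1, p^r q^s]$ into $\lceil p^r q^s / X^{1-\varepsilon}\rceil$ blocks of length $X^{1-\varepsilon}$; since the admissible residues mod $p^r q^s$ number more than the number of blocks, two admissible integers lie in the same block, but that is not quite what we want — instead, simply observe the admissible count exceeds any fixed bound as $X\to\infty$, giving infinitely many admissible $n$, and since admissibility of $n$ is determined by its digit strings this set is genuinely infinite (not just large in one window). A secondary, purely cosmetic obstacle is handling boundary digits (the leading digit $a_d$ in the corollary to Theorem~\ref{thm-main} had a stricter bound $b_d < p-1$); here, because we only impose $a_j \le A$ with $A < p-1$ typically, no separate treatment of the top digit is needed, but if $A = p-1$ or $B = q-1$ the leading-digit exclusion must be accounted for, costing only a factor $(1-1/(A+1))$ that does not affect the exponential rate.
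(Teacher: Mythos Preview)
The paper does not prove this theorem at all; it is quoted from Erd\H{o}s--Graham--Ruzsa--Straus \cite{paul2} in the ``Future directions'' section and no argument is given. So there is no proof in the paper to compare your proposal against, and I can only comment on the proposal itself.

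Your outline has the right flavor but contains two concrete gaps. First, your worry about the bridge inequality is misplaced: for integers $0\le A\le p-1$ the function $g(A)=\log(A+1)-\tfrac{A}{p-1}\log p$ is concave with $g(0)=g(p-1)=0$, hence $g(A)\ge 0$ throughout. Thus $\log_p(A+1)\ge A/(p-1)$ and similarly $\log_q(B+1)\ge B/(q-1)$, so the hypothesis $A/(p-1)+B/(q-1)\ge 1$ \emph{immediately} yields $\log_p(A+1)+\log_q(B+1)\ge 1$, with strict inequality except at the corners. This is the easy direction, not an obstacle.

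Second, and more seriously, your CRT step does not prove what you need. Knowing that $(A+1)^r(B+1)^s$ residue classes modulo $p^rq^s$ have admissible low-order digit blocks tells you nothing about the \emph{high} base-$p$ and base-$q$ digits of an integer $n<p^rq^s$: such an $n$ has roughly $2r$ base-$p$ digits and $2s$ base-$q$ digits, and only half of each string is constrained by the residue. Restricting instead to $n<\min(p^r,q^s)$ kills the CRT product structure. What is actually required --- and is the real content of the Erd\H{o}s--Graham--Ruzsa--Straus argument --- is a quantitative statement that the $p$-admissible integers below $X$ are sufficiently well distributed among residue classes modulo powers of $q$ (or an equivalent device). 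Your proposal asserts ``approximate independence'' but never supplies it; the pigeonhole wrap-up at the end, which you yourself flag as ``not quite what we want,'' does not recover from this. Until that equidistribution input is provided, the argument does not close.
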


This leaves open the question for $k>2$ whether or not there exist 
infinitely many
numbers $n$ such that $c(n,k)$ is neither divisible
by $p$ nor by $q$. The extension 
to more than two primes is open even in the case $k=2$. In particular, a prize
of $\$ 1000$ has been offered by R. Graham for just showing that there are 
infinitely many $n$ such
that $C_{n}$ is coprime to  $105 = 3 \cdot 5 \cdot 7$. 
On the other hand, it is conjectured that there are only finitely many 
indices $n$ such that $C_{n}$ is not divisible 
by any of $3, \, 5, \, 7$ and $11$.  \\

Finally, we remark that 
Erd\"os et al. conjectured in \cite{paul2} that the central
binomial coefficients $C_n$ are never squarefree for $n>4$ which has been
proved by Granville and Ramare in \cite{granville2}. Define 
\begin{equation}
\tilde{c}(n,k) := \text{Numerator} \left( k^{-n} c(n,k) \right).
\end{equation}
\noindent
We have {\em some} empirical evidence which suggests the existence 
of an index $n_{0}(k)$, such that 
$\tilde{c}(n,k)$ is not squarefree for $n \geq n_{0}(k)$. The value 
of $n_{0}(k)$ could be large. For instance 
\begin{eqnarray}
\tilde{c}(178,5) & = & 10233168474238806048538224953529562250076040177895261 \nonumber \\
& & 58561031939088200683714293748693318575050979745244814 \nonumber \\
& & 765545543340634517536617935393944411414694781142
\nonumber
\end{eqnarray}
\noindent
is squarefree, so that $n_{0}(5) \geq 178$.  The numbers $\tilde{c}(n,k)$ 
present new challeges, even in the  case $k=2$. Recall that $\tfrac{1}{2}C_{n}$
is odd if and only if $n$ is a power of $2$. Therefore, $C_{786}$ is not 
squarefree. On the other hand, the  
complete factorization of $C_{786}$ shows that 
$\tilde{c}(786,2)$ is squarefree. We conclude that $n_{0}(2) \geq 786$.

\medskip

\no
{\bf Acknowledgments}. The work of the second author was partially funded by
$\text{NSF-DMS } 0409968$. The first author was partially supported, as 
a graduate student, by the same grant.  

\bigskip

\end{document}